\newcommand{\KK}{\mathbb{K}}
\newcommand{\QQ}{\mathbb{Q}}
\newcommand{\ZZ}{\mathbb{Z}}
\newcommand{\Aut}{\mathrm{Aut}}
\newcommand{\OO}{\mathcal{O}}
\newcommand{\Cl}{\mathrm{Cl}}
\renewcommand{\div}{\mathrm{div}}
\theoremstyle{plain}
\newtheorem{lem}{Lemma}
\newtheorem{prop}{Proposition}
\newtheorem{theor}{Theorem}
\newtheorem{conj}{Conjecture}
\theoremstyle{definition}
\newtheorem{definition}{Definition}
\newtheorem{ex}{Example}
\theoremstyle{remark}
\newtheorem{remark}{Remark}
\begin{document}

\title[Automorphism group orbits on horospherical varieties]{Automorphism group orbits on horospherical varieties and divisor class group}
\author{Sergey Gaifullin}
\address{Moscow Center for Fundamental and Applied Mathematics, Moscow, Russia; \linebreak and \linebreak
National Research University Higher School of Economics, Faculty of Computer Science, Pokrovsky Boulevard 11, Moscow, 109028, Russia}
\email{sgayf@yandex.ru}

\subjclass[2010]{Primary 14R20,  14J50; Secondary 13A50, 14M25.}

\keywords{Horosherical variety, toric variety, divisor class group, automorphism, locally nilpotent derivation.}

\thanks{The author was supported by RSF grant 20-71-00109.}

\maketitle
\begin{abstract}
In 2013 Bazhov proved a criterium for two points on a complete toric variety to lie in the same orbit of the neutral component of automorphism group. This  criterium is in terms of divisor class group. Arzhantsev-Bazhov (2013) obtained a similar criterium for affine toric varieties. We prove a necessary condition similar this criteria to the cases of affine and projective horospherical varieties.
\end{abstract}

\section {Introduction}

Let $\KK$ be an algebraically closed field of characteristic zero. Let $X$ be a projective algebraic variety. We can consider the group of its regular automorphisms $\Aut(X)$.  Then $\Aut(X)$ is a linear algebraic group. We can consider its neutral component. We investigate orbits of the natural action of this group on $X$.
 
 If a connected linear algebraic group acts of $X$, then $G$ maps to $\Aut(X)^0$. Therefore, $\Aut(X)^0$-orbits are unions of $G$-orbits. So, if we have a description of $G$-orbits, we need only to understand which pairs of $G$-orbits can be glued by an element of $\Aut(X)^0$. This approach is very useful if there are only finite number of $G$-orbits. In this paper we investigate $\Aut(X)^0$-orbits on projective horospherical varieties of complexity zero. 
 Let us recall that  a variety is called {\it horospherical}, if it admits such an action of a connected linear algebraic group $G$, that generic points has a maximal unipotent subgroup in isotropy group.  We say that a horospherical variety has complexity zero if the action $G$ on $X$ has an open orbit. The class of horospherical varieties is a natural generalization of toric varieties and a subclass in class of spherical varieties.  
 
In paper~\cite{B} all $\Aut(X)^0$-orbits of complete toric varieties are described. In~\cite{AB} a description of $\Aut(X)^0$-orbits for an affine toric variety is obtained. Note that for an affine variety the automorphism group $\Aut(X)$ can be nonalgebraic. We define the neutral component $\Aut(X)^0$ as a subgroup of $\Aut(X)$, generated by all algebraic families  of automorphisms. Here an algebraic family is  a set $\{\varphi_b\mid b\in B\}$ of automorphisms, where $B$ is an irreducible algebraic variety,  the mapping 
$\varphi\colon B\times X\rightarrow X$ is a morphism, and $\varphi_b(x)=\varphi(b,x)$.

Let $X$ be a complete toric variety. Then there are finite number of $T$-orbits on it. Closures of orbits of codimension 1 are exactly $T$-invariant prime divisors on $X$.  
Consider a point  $x\in X$. Denote by $D(x)$ the set of prime $T$-invariant divisors $D$ such that $x\notin D$. It is clear that if $x$ and $y$  lie on the same $T$-orbit, then $D(x)=D(y)$.  So, we can denote by  $D(\OO)$ the set $D(x)$ for any $x$ in a $T$-orbit $\OO$. Now let us consider a submonoid
 $\Gamma(\OO)$ in the divisor class group $\Cl(X)$ generated by classes of divisors from  $D(\OO)$. 
In~\cite{B} the following assertion is proved. 
\begin{prop}\cite[Theorem~1]{B}\label{baz}
Let $X$ be a completetoric variety. Two $T$-orbits $\OO$ and $\OO'$ lie in the same $\Aut(X)^0$-orbit if and only if
$$\Gamma(\OO)=\Gamma(\OO').$$
\end{prop}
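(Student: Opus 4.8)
The plan is to pass to the homogeneous coordinate (Cox) description of $X$ and to read off from it both the orbit structure and the monoids $\Gamma(\OO)$. Write $\Sigma$ for the fan of $X$, $\Sigma(1)$ for its rays with primitive generators $v_\rho\in N$, and $D_\rho$ for the associated $T$-invariant prime divisor; for a cone $\sigma\in\Sigma$ write $\rho\in\sigma$ when $\rho$ is a ray of $\sigma$. Let $R=\KK[x_\rho\mid\rho\in\Sigma(1)]$ be the Cox ring, graded by $\Cl(X)$ via $\degg x_\rho=[D_\rho]$, and let $\pi\colon\overline{X}\setminus Z\to X$ be Cox's good quotient of $\overline{X}=\mathbb{A}^{\Sigma(1)}$ by the diagonalizable group $H=\operatorname{Hom}(\Cl(X),\KK^{\times})$, where $Z$ is the vanishing locus of the irrelevant ideal. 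In this picture $x\in D_\rho$ exactly when the $\rho$-coordinate of any lift $\bar x\in\overline{X}\setminus Z$ vanishes, so for the $T$-orbit $\OO_\sigma$ attached to $\sigma$ one has $D(\OO_\sigma)=\{D_\rho\mid \rho\notin\sigma\}$ and $\Gamma(\OO_\sigma)$ is the submonoid of $\Cl(X)$ generated by $\{[D_\rho]\mid\rho\notin\sigma\}$. I will repeatedly use the structural fact that $\overline{X}\setminus Z$ consists precisely of those $\bar x$ for which $\{\rho\mid\bar x_\rho=0\}$ is the ray set of some cone of $\Sigma$.

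Next I would invoke the classical description of $\Aut(X)^0$ for a complete toric variety: it is generated by the torus $T$ together with the root subgroups $U_m\cong\GG_a$ indexed by Demazure roots $m\in M$, where a Demazure root is a lattice vector admitting a unique ray $\rho_m$ with $\langle m,v_{\rho_m}\rangle=-1$ and $\langle m,v_\rho\rangle\ge 0$ for all $\rho\ne\rho_m$. Each $U_m$ lifts to $\overline{X}$ as the $\GG_a$-action translating a single coordinate,
\[
\bar x_{\rho_m}\longmapsto \bar x_{\rho_m}+t\prod_{\rho\ne\rho_m}\bar x_\rho^{\langle m,v_\rho\rangle},\qquad \bar x_\rho\ (\rho\ne\rho_m)\ \text{fixed};
\]
it is homogeneous of degree $0$ since $\sum_\rho\langle m,v_\rho\rangle[D_\rho]=[\div(\chi^m)]=0$, and it preserves $\overline{X}\setminus Z$.

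For necessity I would show $\Gamma$ is constant along $\Aut(X)^0$-orbits. As $T$ fixes every $T$-orbit, it suffices to follow one $U_m$-orbit: only $x_{\rho_m}$ changes, so $D(\cdot)$ can differ from $D(\OO)$ only in whether $D_{\rho_m}$ is present, and the translation is nontrivial exactly when every $\rho$ with $\langle m,v_\rho\rangle>0$ already lies in $D(\OO)$. The relation $[D_{\rho_m}]=\sum_{\rho\ne\rho_m}\langle m,v_\rho\rangle[D_\rho]$ then writes $[D_{\rho_m}]$ through classes of divisors of $D(\OO)$ distinct from $D_{\rho_m}$, so inserting or deleting $D_{\rho_m}$ does not change the generated monoid; hence $\Gamma(\OO)=\Gamma(\OO')$ whenever $\OO,\OO'$ share an $\Aut(X)^0$-orbit. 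For sufficiency I would run this computation in reverse to attach to each $T$-orbit a canonical maximal orbit. If $D_\rho\notin D(\OO_\sigma)$ (i.e. $\rho\in\sigma$) but $[D_\rho]$ lies in the monoid generated by $D(\OO_\sigma)$, write $[D_\rho]=\sum_{\rho'\notin\sigma}a_{\rho'}[D_{\rho'}]$ with $a_{\rho'}\in\NN$; lifting to $\ZZ^{\Sigma(1)}/\operatorname{im}(M)=\Cl(X)$ and negating yields $m\in M$ with $\langle m,v_\rho\rangle=-1$, $\langle m,v_{\rho'}\rangle=a_{\rho'}\ge 0$, and all remaining pairings $0$, i.e. a Demazure root with $\rho_m=\rho$ whose positive pairings are supported in $D(\OO_\sigma)$. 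Applying $U_m$ then moves $\OO_\sigma$ to the orbit with $D$-set enlarged by $D_\rho$, and this is automatically a genuine $T$-orbit because $\overline{X}\setminus Z$ is $U_m$-invariant. Iterating while $\Gamma$ stays fixed, one adjoins rays until the $D$-set equals $\{D_\rho\mid [D_\rho]\in\Gamma(\OO)\}$; this maximal orbit depends only on $\Gamma(\OO)$, so two $T$-orbits with equal $\Gamma$ are driven into the same orbit and thus share an $\Aut(X)^0$-orbit.

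The step I expect to require the most care, and the main obstacle, is the sufficiency bookkeeping: checking that the vector manufactured from a non-negative class relation is genuinely a Demazure root (a single $-1$, all other pairings $\ge 0$), that its positive pairings lie in the current $D$-set so that the $U_m$-translation is actually nontrivial, and that the iteration terminates at the same maximal orbit independently of the order of the moves. The preservation of $\overline{X}\setminus Z$ under the root subgroups is exactly what guarantees that each intermediate $D$-set is realized by an honest cone of $\Sigma$, which closes the only combinatorial gap between the monoid condition and the fan.
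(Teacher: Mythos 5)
Your proposal is correct and takes essentially the same route as the source this proposition is quoted from: the paper itself states the result without proof, citing Bazhov \cite{B}, whose argument is precisely this Cox-coordinate/Demazure-root analysis, and your necessity computation (the relation $[D_{\rho_m}]=\sum_{\rho\ne\rho_m}\langle m,v_\rho\rangle[D_\rho]$ with nonnegative coefficients supported in $D(\OO)$) is the same one the paper performs in Lemma~\ref{innv} via $\pi_A(\div(\partial(f)/f))$ for the horospherical generalization. The inputs you black-box (generation of $\Aut(X)^0$ by $T$ and the root subgroups, and descent of the lifted $\GG_a$-actions to $X$) are exactly the structure theory \cite{B} also relies on, and in your sufficiency step the new vanishing set is the ray set of the face $\sigma\cap e^{\perp}$ of $\sigma$, so the combinatorial point you flag as the main obstacle is in fact immediate in the direction you use it.
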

 The same criterium holds for affine toric varieties, see~\cite{AB}.

Our goal is to investigate $Aut(X)^0$-orbits of affine and projective irreducible normal horospherical varieties of complexity zero. In the case of horospherical variety we define $D(x)$ as the set of  prime $G$-invariant divisors $D$ such that $x\notin D$. Again it is clear that $D(x)$ depends only on $G$-orbit, so we use notation $D(\OO)$. But for horospherical variety we replace $\Cl(X)$ to its modification $\Cl_G(X)$, see Definition~\ref{ddd}.
Then we can define $\Gamma_G(\OO)$ by the way similar to the case of toric varieties. We prove that condition of Proposition~\ref{baz} remains necessary, see Theorem~\ref{main}.

But this condition is not sufficient in case of horosherical varieties, see Example~\ref{se}. We state a conjecture that if points in $\OO$ and $\OO'$ have equal dimensions of tangent spaces, then  this condition is sufficient. See Conjecture~\ref{ccc}, compare to \cite[Conjecture~1]{BGSh}. 

The author is a Young Russian Mathematics award winner and would like to thank its sponsors and jury.

\section{Preliminaries}\label{tv}

\subsection{Toric varieties}

In this section we remind basic facts about toric varieties. More information about toric varieties one can find, for example, in~\cite{CLSH} and~\cite{Fu}. A toric variety is a normal irredusible variety $X$ with a fixed action of an algebraic torus $T\cong (\KK^\times)^n$ with an open orbit.  Let $N\cong \ZZ^n$ be the lattice of one-parameter subgroups of $T$, and let $M\cong \ZZ^n=N^*$ be the dual to $N$ lattice of characters of~$T$. Denote by $\chi^m\in \KK[T]$ the character, corresponding to a vector $m\in M$. By~$\langle\cdot,\cdot\rangle\colon M\times N\rightarrow \ZZ$ we denote the natural pairing between these lattices. Let us also introduce  $M_\QQ=M\otimes_\ZZ\QQ$ and $N_\QQ=N\otimes_\ZZ\QQ$ which are rational vector spaces spanned by these lattices. Each toric variety corresponds to a fan $\Delta$ of cones in $N_\QQ$. For an affine toric variety this fan consists of one cone and all its faces. Define the dual cone $\sigma^\vee\subseteq M_\QQ$ by
$$\sigma^\vee=\{m\in M\mid \langle m,n \rangle\geq 0| n\in\sigma\}.$$ then the toric variety $X(\sigma)$, corresponding to $\sigma$ has the following algebra of regular functions 
$$\KK[X]=\bigoplus_{m\in M\cap \sigma^\vee}\KK\chi^m.$$
There is a one-to-one correspondence between faces of $\sigma$ and faces of $\sigma^\vee$. A face $\tau$ of $\sigma$ corresponds to the face $\widehat{\tau}=\sigma^\vee\cap  \tau^\bot$. Also there is a one-to-one correspondence between faces of $\sigma^{\vee}$ and $T$-orbits of $X$, see the next section for description of this correspondence in more general case of horospherical varieties.  

An arbitrary toric variety can be obtained by gluing affine charts corresponding to cones of $\Delta$. A variety is complete if and only if union of all cones in $\Delta$ coincides with $M_\QQ$.

\subsection{Affine horospherical varieties}

We recall some results on horospherical varieties, all proofs can be found in \cite{PV}, see also \cite{T}.

Let $G$ be a connected linear algebraic group. 

\begin{definition}
An irreducible $G$-variety $X$ is called \emph{horospherical}, if for a generic point $x\in X$ the stabilizer of $x$ contains a maximal unipotent subgroup $U\subseteq G$.

If $X$ contains an open $G$-orbit, then $X$ is called \emph{complexity-zero} horospherical variety. In~\cite{PV} affine complexity-zero horospherical varieties are called $S$-varieties.
\end{definition}

Suppose that $X$ is an affine complexity-zero horospherical variety. It is easy to see that the unipotent radical of $G$ acts trivially on $X$. Hence we may assume that $G$ is reductive. Taking a finite covering, we may assume that $G=T\times G'$, where $T$ is an algebraic torus and $G'$ is a semisimple group.

Let $O$ be the open orbit in $X$. We have the following sequence of inclusions
$$
\KK[X]\hookrightarrow\KK[O]\hookrightarrow\KK[G].
$$

Let $B$ be a Borel subgroup of $G$ and let $M=\mathfrak{X}(B)$ be the group of characters of $B$.   
For a $\Lambda\in M$ we put
$$
S_\Lambda=\left\{ f\in\KK[G]\mid f(gb)=\Lambda(b)f(g)\ \text{for all} \ g\in G, b\in B\right\}.
$$
Then
$$
S_{\Lambda} S_{\Lambda'}=S_{\Lambda+\Lambda'}.
$$

The set $\mathfrak{X}^+(B)$ of dominant weights consists of all $\Lambda$ such that $S_\Lambda\neq\{0\}$. 
It is proved in~\cite{PV} that for an affine complexity-zero horospherical $G$-variety $X$ there is a decomposition
$$
\KK[X]=\bigoplus_{\Lambda\in P} S_\Lambda
$$
for some submonoid $P\in\mathfrak{X}^+(B)$. 

Using notations from the previous section, we denote by $\sigma^\vee$ the cone in $M_\QQ$ spanned by~$P$. The variety $X$ is normal if and only if $P$ is saturated, i.e. $P=\sigma^{\vee}\cap \ZZ(P)$, where $\ZZ(P)$ is the group generated by $P$. There is a one-to-one correspondence between faces of~$\sigma$ and $G$-orbits of $X$. More precisely, if
$O_\tau\subseteq  X$ is the $G$-orbit in $X$ corresponding to a face~$\tau$ of the cone $\sigma$, then the ideal of functions vanishing on $O_\tau$ has the form 
$$
I(O_\tau)=\bigoplus_{\Lambda\in P\setminus\tau}S_\Lambda.
$$ 
This ideal vanishes on the closure $\overline{O_\tau}$. Then 
$$O_\tau=\overline{O_\tau}\setminus  \left(\bigcup_{\gamma \prec \tau}\overline{O_\gamma} \right).$$
If $\widehat{\xi} \preceq \sigma^\vee$ is the face corresponding to $\xi \preceq \sigma$, we use both denotations: $O_\xi=O_{\widehat{\xi}}$.

\begin{remark}
The cone $\sigma^\vee$ can be not of full dimension. But if we replace $M$ by the group generated by $P$, the correspondence between faces and orbits remains.
\end{remark}

To obtain a variety $X$ explicitly one should take generators $\Lambda_1,\ldots, \Lambda_m$ of $P$ and consider the sum of irreducible $G$-representation which are contragradient to ones with highest weights  $\Lambda_1,\ldots, \Lambda_m$. In each $V(\Lambda_i)^*$ one need to find the eigenvector $v_i$. Put $v=v_1+\ldots+v_m$. Then $X\cong \overline{Gv}$. If $m=1$, then the variety $X$ is the closure of the eigenvector of an irreducible representation. Such varieties are called $HV$-varieties.

\subsection{Divisor class group}

Let $X$ be a normal variety. By a prime divisor we mean a closed subset of codimension one.  Let us consider the group of Weil divisor, i.e. a group of formal integer linear combinations of prime divisors: 
$$WDiv(X)=\{\lambda_1 D_1+\ldots+\lambda_k D_k| D_i\text{ are prime divisors}\}.$$
Elements of $WDiv(X)$ we call Weil divisors on $X$.
To each rational function $f\neq 0\in \KK(X)$ and a prime divisor $D$ one can define the order $\nu_D(f)$ of $f$ on $D$. Then we can consider the  principle divisor 
$$\div(f)=\sum_{D\text{ is a prime divisor}}\nu_D(f)D.$$ All principle divisors form the group of principle divisors $PDiv(X)\subseteq WDiv(X)$. By divisor class group we mean the quotient group $\Cl(X)=WDiv(X)/PDiv(X)$. The image of $D\in WDiv(X)$ under the canonical homomorphism $WDiv(X)\rightarrow \Cl(X)$ we call class of $D$ and denote by $[D]$.

Assume that we have a $G$-action on $X$ with an open orbit, where $G$ is a connected linear algebraic group. There are finite number of $G$-orbits of codimension one. In case of toric variety they correspond to extremal rays $\rho_1,\ldots, \rho_k$ of cones $\sigma\in\Delta$.  We denote the set of extremal rays by $\Delta(1)$. Their closures are $T$-invariant prime divisors $D_1,\ldots D_k$. Let $v_{\rho_1},\ldots, v_{\rho_k}$ be primitive integer vectors on $\rho_1,\ldots, \rho_k$ In toric case if can be proved, see~\cite{Fu}, that 
$\div(\chi^m)=\sum \langle m,v_{\rho_i}\rangle D_i$ and $\Cl(X)$ is generated by $[D_1],\ldots, [D_k]$. In case of affine horospherical variety, $G$-invariant prime divisors corresponds to some (may be not all) extremal rays $\rho_1,\ldots,\rho_s$ of~$\sigma$. Then $[D_1],\ldots, [D_s]$ generates a subgroup, which can be a proper subgroup of~$\Cl(X)$.

\subsection{Locally nilpotent derivations and Demazure roots}

More information about locally nilpotent derivations one can find, for example, in \cite{Fr}. 

Let $A$ be a commutative associative algebra over $\KK$.
A linear mapping $\partial : A \rightarrow A$ is called {\it a derivation} if it satisfies the Leibniz rule: $\partial(ab) = a\partial(b) + b\partial(a)$.
A derivation is called {\it locally nilpotent or LND} if for any $a \in A$ there is a positive integer $n$ such that $\partial^n(a) = 0$.

Exponential mapping gives a correspondence between LND and subgroups in $\Aut(A)$ isomorphic to the additive group of $\KK$. An  LND $\delta$ corresponds to the subgroup $\{\exp(t\delta)\}$.
Let $F$ be an abelian group. Consider $F$-grading:

$$A = \bigoplus_{f \in F}A_f, \ \ \ A_fA_g \subseteq A_{f+g}.$$

A derivation $\partial : A \rightarrow A$ is called {\it $F$-homogeneous of degree $f_0 \in F$} if for all $a \in A_f$ we have $\partial(a) \in A_{f+f_0}.$

Consider a finitely generated cone $\sigma$.
Let $$\mathfrak{R}_{\rho}:=\{e \in M \ \ | \ \ \langle e,v_{\rho} \rangle = -1, \langle e,v_{\rho '} \rangle \geq 0 \ \ \forall \rho ' \neq \rho \in \sigma(1)\}.$$
Then the elements of the set $\mathfrak{R} := \bigsqcup\limits_{\rho}\mathfrak{R}_{\rho}$ are called the {\it Demazure roots} of the cone~$\sigma$.

We will call ray $\rho$  the {\it distinguished ray} of the Demazure root $e$ if $e \in \mathfrak{R}_{\rho}$.

Let $\tau$ be a face of $\sigma^\vee$. Let $\rho_1,\ldots, \rho_s$ be all rays normal to $\tau$ and $v_1,\ldots, v_s$ be their primitive vectors. Suppose $e$ is a Demazure root such that $\langle e, v_{1}\rangle=-1$ and  $\langle e, v_{i}\rangle=0$ for all $2\leq i\leq s$. Then we say that $e$ is a $\tau$-{\it root}. 

For an affine toric variety $X$ each Demazure root corresponds to an $M$-homogeneous LND of $\KK[X]$ given by 
\begin{equation}\label{demaz}\partial_e(\chi^m) = \langle p_{\rho}, m \rangle \chi^{e+m}.\end{equation}

\section{$G$-invariant divisor class group}

In this section we introduce a concept of a modified divisor class group of a horospherical variety $X$. Let us subdivide all prime divisors on $X$ onto two nonintersecting subsets $A$ and $B$, where $A$ is the set of all $G$-invariant prime divisors, i.e. closures of $G$-orbits of codimension one.  Then 
$$
WDiv(X)=WDiv_A(X)\oplus WDiv_B(X),\  \text{ where } 
$$
$$
WDiv_A(X)=\left\{\sum_{D_i\in A}\lambda_iD_i\right\} \ \text{ and }\  WDiv_B(X)=\left\{\sum_{D_i\in B}\lambda_iD_i\right\}.
$$
 Let us denote by $\pi_A$ and $\pi_B$ projections on $WDiv_A(X)$ and $WDiv_B(X)$. 
 
 Consider the group 
 $$HPDiv(X)=\{\div(f)|f\in\KK(X)\text{ is } M\text{-homogeneous}\}.$$ 
 
 \begin{definition}\label{ddd}
By {\it $G$-invariant divisor class group} of $X$ we call the quotient group 
$$\Cl_G(X)=WDiv_A(X)/\pi_A(HPDiv(X)).$$
 \end{definition}

For any $D\in WDiv_A(X)$ we can consider its class $[D]_G\in \Cl_G(X)$.

\begin{remark}
In case of toric $X$ the divisor class group $\Cl(X)$ is generated by classes of $T$-invariant prime divisors.  Let us take an $M$-homogeneous $f\in \KK[X]$. 
Then $\div(f)$ is $T$-invariant.  Therefore, $\div(f)\in WDiv_A(X)$. Conversely, if $\div (f)$ is in $WDiv_A(X)$, then $f$ is $M$-homogeneous. Therefore, 
$$\Cl_G(X)=WDiv_A(X)/(PDiv(X)\cap WDiv_A(X))\cong \Cl(X).$$
\end{remark}

\begin{remark}
Let $X$ be a normal affine horospherical variety. If we have an $M$-homogeneous function $f\in\KK[X]_m$, then its order on $\overline{O_{\rho_i}}$ equals to $\langle m,v_i \rangle$. Hence, $\pi_A(\div(f))=\sum_{i=1}^s\langle m, v_i\rangle D_i$. 
If all extremal rays $\rho_1,\ldots, \rho_k$ corresponds to $G$-invariant divisors, then
 group $\Cl_G(X)$ depends only on the cone $\sigma$ and it is isomorphic to $\Cl(Y)$, where $Y$ is the toric variety corresponding to the cone $\sigma$. In arbitrary case we have an algorithm of computation of the group $\Cl_G(X)$, since all relations are given by $\pi_A(\div(f))$, $f\in \KK[X]_m$ depends only on $m$.
\end{remark}

Let us finish this section by a definition of $G$-invariant analog of $\Gamma(\OO)$.

\begin{definition}
Let $X$ be a horospherical variety and $\OO$ be a $G$-orbit on $X$. Let us denote by $\Gamma_G(\OO)$ the submonoid in $\Cl_G(X)$ generated by all classes $[D_i]_G$ of $G$-invariant prime divisors $D_i$ such that $\OO\not\subseteq D_i$.
\end{definition}

 \section{Main results}
 
The main result of this paper is the following theorem.

\begin{theor}\label{main}
Let $X$ be an affine or a projective irreducible normal horospherical variety of complexity zero. If two $G$-orbits $\OO$ and $\OO'$ lie in the same $\Aut(X)^0$-orbit, then $\Gamma_G(\OO)=\Gamma_G(\OO')$.
\end{theor}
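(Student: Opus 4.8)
The plan is to show that the invariant $\Gamma_G(\OO)$ is preserved by every element of $\Aut(X)^0$, and since $\OO$ and $\OO'$ in the same $\Aut(X)^0$-orbit means there is $\varphi \in \Aut(X)^0$ with $\varphi(\OO) = \OO'$, this will force $\Gamma_G(\OO) = \Gamma_G(\OO')$. Because $\Aut(X)^0$ is generated by algebraic families of automorphisms (in the affine case by the stated definition, and in the projective case as the neutral component of the algebraic group), it suffices to verify the claim for a single connected family. The key reduction is that any such family is generated by the image of $G$ together with the one-parameter subgroups $\exp(t\delta)$ coming from locally nilpotent derivations, so I would reduce to understanding how these generators move $G$-orbits and how they act on the set $D(\OO)$ of $G$-invariant prime divisors avoiding a point of $\OO$.

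Let me set up the core argument. First I would recall that a $G$-invariant prime divisor $D_i$ is the closure of a codimension-one orbit, corresponding to an extremal ray $\rho_i$, and that the condition $\OO \not\subseteq D_i$ is equivalent to a combinatorial condition on the face of $\sigma$ defining $\OO$. The essential point is that $\Gamma_G(\OO)$ only depends on \emph{which} $G$-invariant divisors $D_i$ contain $\OO$, via their classes $[D_i]_G$ in $\Cl_G(X)$. So I must show: if $\varphi \in \Aut(X)^0$ carries $\OO$ to $\OO'$, then the set of classes $\{[D_i]_G : \OO \not\subseteq D_i\}$ generates the same monoid as $\{[D_j]_G : \OO' \not\subseteq D_j\}$. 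The subtlety is that $\varphi$ need not permute the $G$-invariant divisors themselves; a unipotent one-parameter subgroup $\exp(t\delta)$ from a Demazure root can move a $G$-orbit off of one invariant divisor while fixing the rest, and it is precisely the passage to the modified group $\Cl_G(X)$ — quotienting only by $\pi_A(HPDiv(X))$ rather than all of $PDiv(X)$ — that is designed to make the relevant classes match up.

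The technical heart, following the toric template of~\cite{B} and~\cite{AB}, is the analysis of the Demazure-root derivations~\eqref{demaz}. For a root $e$ with distinguished ray $\rho$, the flow $\exp(t\partial_e)$ fixes every $G$-invariant divisor except possibly $D_\rho$, and it can move a point off $D_\rho$ exactly when $e$ is a $\tau$-root for the face $\tau$ defining $\OO$. I would show that whenever such a root exists, the corresponding class $[D_\rho]_G$ already lies in the monoid generated by the \emph{other} classes in $\Gamma_G(\OO)$ — this is where the defining relation $\langle e, v_\rho\rangle = -1$, $\langle e, v_{\rho'}\rangle \geq 0$ translates, via $\pi_A(\div(\chi^e))$, into an identity in $\Cl_G(X)$ expressing $[D_\rho]_G$ through the divisors the root does not lower. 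Thus adding or removing $D_\rho$ from $D(\OO)$ does not change $\Gamma_G(\OO)$, and the monoid is stable under the flow. Iterating over all generators of $\Aut(X)^0$ gives invariance.

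\textbf{The main obstacle} I anticipate is the projective case and the globalization step. For affine $X$ the decomposition $\KK[X] = \bigoplus_{\Lambda \in P} S_\Lambda$ and the explicit LND~\eqref{demaz} give direct control, but for a projective horospherical variety one must either work with the affine cones over $X$ or cover $X$ by $G$-stable (quasi-)affine charts and check that the local computations of $\pi_A(\div(\chi^e))$ glue to a globally well-defined relation in $\Cl_G(X)$; keeping track of how $HPDiv(X)$ behaves across charts, and ensuring that no non-$G$-invariant divisors interfere after applying $\pi_A$, is the delicate bookkeeping. A secondary difficulty is verifying that \emph{every} algebraic family in $\Aut(X)^0$ is accounted for by the $G$-action together with the root subgroups — i.e. that these generate the relevant part of $\Aut(X)^0$ for the purpose of moving between $G$-orbits — rather than there being exotic families that alter $\Gamma_G$; I expect this to follow from the structure theory of~\cite{PV} but it is the step I would scrutinize most carefully.
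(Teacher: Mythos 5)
Your proposal follows essentially the same route as the paper: reduce to the generators of $\Aut(X)^0$ given by $G$ and the root subgroups (the paper's Lemma~\ref{ch}, via~\cite{BGSh}), show that a $\tau$-root $e$ with distinguished ray $\rho$ yields a relation in $\Cl_G(X)$ expressing $[D_\rho]_G$ as a nonnegative combination of the other invariant classes (the paper's Lemma~\ref{innv}), and handle the projective case by passing to the affine cone. The only refinement the paper makes that you elide is that in the horospherical setting there is no character $\chi^e$ in $\KK(X)$, so the needed $M$-homogeneous rational function of weight $e$ is produced as $\partial(f)/f$ for a homogeneous $f$ with $\partial(f)\neq 0$.
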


In~\cite{BGSh} the $\Aut(X)^0$-orbits for an affine horospherical variety of complexity zero were investigated. 

\begin{prop}\cite[Corollary~4]{BGSh}
Assume $X$ is an affine horospherical variety of complexity zero. Let $H$ be the subgroup generated by $G$ and all exponents of $M$-homogeneous LNDs. Then $\Aut(X)^0$-orbits on $X$ coincide with $H$-orbits.
\end{prop}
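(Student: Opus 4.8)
The easy half of the statement is that $H\subseteq\Aut(X)^0$: the group $G$ is connected and acts on $X$, hence maps into $\Aut(X)^0$, while for every $M$-homogeneous LND $\delta$ the family $\{\exp(t\delta)\mid t\in\KK\}$ is an algebraic family of automorphisms and so lies in $\Aut(X)^0$. Consequently each $H$-orbit is contained in an $\Aut(X)^0$-orbit, and the whole content is the reverse inclusion: every $\Aut(X)^0$-orbit is a \emph{single} $H$-orbit. The plan is to prove this by comparing tangent spaces to orbits. Writing $\mathfrak{g}$ for the (ind-)Lie algebra of $\Aut(X)^0$ and $\mathfrak{h}=\operatorname{Lie}H$, both realized as derivations of $\KK[X]$, I would establish the pointwise equality $\mathfrak{h}\cdot x=\mathfrak{g}\cdot x$ of orbit tangent spaces for every $x\in X$. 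Granting this, the conclusion is formal: an $\Aut(X)^0$-orbit $\OO$ is irreducible and smooth and, since $H\subseteq\Aut(X)^0$, is a disjoint union of $H$-orbits; the tangent equality forces every $H$-orbit inside $\OO$ to have dimension $\dim\OO$, hence to be open in $\OO$; as an irreducible variety is not a disjoint union of two nonempty open sets, $\OO$ is a single $H$-orbit.

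To compare the tangent spaces I would use the $M$-grading. Let $\mathbb{T}$ be the torus acting on $\KK[X]=\bigoplus_{\Lambda\in P}S_\Lambda$ by the character $\Lambda$ on $S_\Lambda$ (after replacing $M$ by $\ZZ(P)$ as in the Remark of Section~3); it is an algebraic subgroup of $\Aut(X)^0$, and conjugation by $\mathbb{T}$ equips $\mathfrak{g}$ with an $M$-grading $\mathfrak{g}=\bigoplus_{e}\mathfrak{g}_e$, where $\mathfrak{g}_e$ consists of the homogeneous derivations shifting weight by $e$. The decisive dichotomy is the following. Any one-parameter torus $U\cong\mathbb{G}_m$ normalized by $\mathbb{T}$ is acted on by $\mathbb{T}$ through $\operatorname{Aut}(\mathbb{G}_m)=\ZZ/2\ZZ$, hence trivially by connectedness of $\mathbb{T}$; so a $\mathbb{T}$-homogeneous semisimple generator necessarily has weight $0$. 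Therefore every nonzero-weight homogeneous generator is locally nilpotent, i.e. every $\delta\in\mathfrak{g}_e$ with $e\neq 0$ is an $M$-homogeneous LND, attached to a Demazure root; thus $\delta\in\mathfrak{h}$ and $\delta\cdot x\in\mathfrak{h}\cdot x$. Decomposing an arbitrary integrable $\xi\in\mathfrak{g}$ as $\xi=\sum_e\xi_e$ (the homogeneous parts again lie in $\mathfrak{g}$, since $\mathbb{T}\subseteq\Aut(X)^0$ preserves $\mathfrak{g}$), this accounts for every nonzero-degree contribution to $\xi\cdot x$.

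What remains, and what I expect to be the main obstacle, is the degree-zero part $\mathfrak{g}_0$, the Lie algebra of the reductive group $R$ of grading-preserving automorphisms in $\Aut(X)^0$. It contains $\operatorname{Lie}G$ and $\operatorname{Lie}\mathbb{T}$, and one should first check that it equals $\operatorname{Lie}G+\operatorname{Lie}\mathbb{T}$, i.e. that there are no hidden grading-preserving automorphisms beyond $G$ and the grading torus. The genuine difficulty is that $\mathbb{T}\not\subseteq G$ in general: for the affine cone $X=\overline{Gv}$ over a flag variety with $G$ semisimple the grading torus is the scaling $\mathbb{G}_m\not\subseteq G$. One therefore cannot hope for $\mathbb{T}\subseteq H$ and must instead prove the weaker, orbitwise statement $\operatorname{Lie}\mathbb{T}\cdot x\subseteq\mathfrak{h}\cdot x$ for every $x$. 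The mechanism is that an infinitesimal rescaling of the weight coordinates of $x$ can, on each $G$-orbit $O_\tau$, be matched by a combination of $\operatorname{Lie}G$ and the root-subgroup directions $\delta_e$ coming from the Demazure roots; concretely one uses the explicit model $X=\overline{Gv}$ with $v=v_1+\dots+v_m$ together with formula~\eqref{demaz} to realize the grading-torus tangent direction at $x$ inside $\operatorname{Lie}G\cdot x+\sum_{e\neq0}\KK\,\delta_e\cdot x$. This torus absorption at the level of orbit tangent spaces is the technical heart of the argument.

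Once $\mathfrak{h}\cdot x=\mathfrak{g}\cdot x$ has been verified for all $x\in X$, the formal dimension argument of the first paragraph applies verbatim and yields that the $\Aut(X)^0$-orbits coincide with the $H$-orbits, as claimed.
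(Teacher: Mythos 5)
Note first that the paper contains no proof of this Proposition: it is quoted from \cite{BGSh}, so your attempt can only be measured against the known technique (torus conjugation of algebraic subgroups, as in \cite{AB} and in \cite{BGSh} itself). Your skeleton is indeed that technique, and most of it is sound: the inclusion $H\subseteq\Aut(X)^0$, the reduction to equality of orbit tangent spaces (using that orbits of groups generated by connected algebraic subgroups are smooth and locally closed), and the dichotomy that nonzero-weight homogeneous integrable fields are locally nilpotent. But two steps are defective as written. First, your justification of the dichotomy via $\mathrm{Aut}(\mathbb{G}_m)=\ZZ/2\ZZ$ only rules out nonzero-weight \emph{semisimple one-parameter subgroups}; it does not show that an arbitrary $\delta\in\mathfrak{g}_e$, $e\neq 0$, is an LND, and for an ind-group the claim that homogeneous components of an integrable field are again integrable is not automatic. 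Both are repaired by the standard finite-dimensional reduction: replace an algebraic subgroup $F\subseteq\Aut(X)$ by the (algebraic, connected) subgroup generated by its $\mathbb{T}$-conjugates, so that its finite-dimensional Lie algebra is $M$-graded; then an element of weight $e\neq 0$ is $\mathrm{Ad}$-conjugate to all of its nonzero scalar multiples, hence nilpotent, hence exponentiates to a $\mathbb{G}_a$-subgroup whose generator is an $M$-homogeneous LND.

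Second, and more seriously, the step you flag as ``one should first check'' is false: in general $\mathfrak{g}_0\neq\operatorname{Lie}G+\operatorname{Lie}\mathbb{T}$. Take $G=\mathrm{Sp}_4$ and $P=\ZZ_{\geq 0}\,\omega_1$, so $X=\KK^4$ with $\KK[X]=\bigoplus_k S_{k\omega_1}$ (symmetric powers of the standard symplectic representation are irreducible); every linear vector field is degree-preserving and integrable, so $\mathfrak{g}_0\supseteq\mathfrak{gl}_4\supsetneq\mathfrak{sp}_4+\KK E$. So a proof resting on that identity would collapse. Fortunately the degree-zero part needs no such identity, and your ``technical heart'' (torus absorption at tangent level) is also unnecessary: any integrable degree-zero field integrates inside the centralizer $C$ of $\mathbb{T}$ in $\Aut(X)$, and $C$ stabilizes every ideal $I(O_\tau)=\bigoplus_{\Lambda\in P\setminus\tau}S_\Lambda$, since these are sums of \emph{entire} graded components; hence $C$ stabilizes each $\overline{O_\tau}$ and therefore each $O_\tau$. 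As $G\subseteq C$ and each $O_\tau$ is a single $G$-orbit, the $C$-orbits coincide with the $G$-orbits, so every degree-zero contribution to $\mathfrak{g}\cdot x$ already lies in $\operatorname{Lie}G\cdot x$; the same finiteness-of-orbits argument shows directly that $\mathbb{T}$ (which commutes with $G$) preserves each $G$-orbit, so $\operatorname{Lie}\mathbb{T}\cdot x\subseteq\operatorname{Lie}G\cdot x$. With these two repairs your outline becomes a correct proof along essentially the intended lines; as submitted, however, it contains one wrong lemma and one unproved key dichotomy.
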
 

Moreover, let $\tau$ be a face of $\sigma$, then \cite[Theorem~1]{BGSh} asserts that a closure $\overline{O_\tau}$  of a $G$-orbit $O_\tau$ is not $\Aut(X)^0$-invariant if and only if there exists a nonzero $M$-homogeneous LND with degree equals to $\widehat{\tau}$-root.  It is easy to see that exponent of an LND $\partial$ of degree $e\in\mathfrak{R}_\rho$, which is $\widehat{\tau}$-root, takes points in $O_\tau$ to points in $O_{\zeta}$, where $\zeta=\mathrm{cone}(\tau, \rho)$. Let us say that such pairs of faces are {$\rho$}-connected. So, we obtain the following lemma.

\begin{lem}\label{ch}
Assume $X$ is an affine horospherical variety of complexity zero.Two $G$-orbits $O_\tau$ lie and $O_{\tau'}$ in the same $\Aut(X)^0$-orbit if and only if there is a chain of faces $\tau=\tau_1,\tau_2,\ldots, \tau_m=\tau'$ such that $\tau_i$ and $\tau_j$ are $\xi_i$-connected for some $\xi_i\in\sigma(1)$.
\end{lem}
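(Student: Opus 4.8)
The plan is to pass from $\Aut(X)^0$ to the group $H$ of \cite[Corollary~4]{BGSh}, which is generated by $G$ together with the one-parameter subgroups $\{\exp(t\partial_e)\}$ of the $M$-homogeneous locally nilpotent derivations, and then to read off the orbit transitions from the combinatorics of the Demazure roots. The mechanism tying the two directions together is the following remark, which I would state first: since $G\subseteq H$ acts transitively on each $G$-orbit, whenever some $h\in H$ sends a point of a $G$-orbit $\OO_1$ into a $G$-orbit $\OO_2$, the whole orbits satisfy $\OO_1\subseteq H\cdot x\supseteq\OO_2$ for that point $x$, so $\OO_1$ and $\OO_2$ lie in one $H$-orbit. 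Being in the same $H$-orbit is an equivalence relation, so chains of such transitions compose.

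For the \textbf{if} direction, suppose $\tau=\tau_1,\dots,\tau_m=\tau'$ is a chain in which $\tau_i$ and $\tau_{i+1}$ are $\xi_i$-connected. By the definition of $\xi_i$-connectedness recalled before the lemma, for each $i$ there is an $M$-homogeneous LND whose exponent $\exp(t\partial)\in H$ sends some point of one of $O_{\tau_i}$, $O_{\tau_{i+1}}$ into the other; by the remark above $O_{\tau_i}$ and $O_{\tau_{i+1}}$ then lie in one $H$-orbit. Transitivity gives that $O_\tau$ and $O_{\tau'}$ lie in one $H$-orbit, hence in one $\Aut(X)^0$-orbit by \cite[Corollary~4]{BGSh}.

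For the \textbf{only if} direction, assume $O_\tau$ and $O_{\tau'}$ lie in one $\Aut(X)^0=H$-orbit. Fix $x\in O_\tau$ and $h\in H$ with $hx\in O_{\tau'}$, and write $h=s_N\cdots s_1$ as a product of generators, each $s_j$ either in $G$ or of the form $\exp(t_j\partial_{e_j})$. Tracking the trajectory $x_0=x$, $x_j=s_jx_{j-1}$, let $O_{\mu_j}$ be the $G$-orbit of $x_j$, so $\mu_0=\tau$ and $\mu_N=\tau'$. A factor $s_j\in G$ leaves the orbit unchanged, so after discarding the indices with $\mu_j=\mu_{j-1}$ the remaining faces will form the required chain, provided we establish the central claim: \emph{if} $\exp(t\partial_e)$, with $e\in\mathfrak{R}_\rho$, carries a point of $O_\mu$ into a distinct orbit $O_{\mu'}$, \emph{then} $\mu$ and $\mu'$ are $\rho$-connected.

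This claim is the main obstacle; it is the converse of the ``easy to see'' fact preceding the lemma, and it is where the Demazure-root inequalities enter. I would prove it by analyzing when the orbit-closure ideal $I(O_\mu)=\bigoplus_{\Lambda\in P\setminus\widehat\mu}S_\Lambda$ fails to be $\partial_e$-stable: since $\partial_e$ raises the weight by $e$, instability forces a weight $\Lambda\in P$ with $\Lambda\notin\widehat\mu$, $\partial_e(S_\Lambda)\neq 0$ and $\Lambda+e\in\widehat\mu$. The inequalities $\langle e,v_\rho\rangle=-1$ and $\langle e,v_{\rho'}\rangle\geq 0$ for $\rho'\neq\rho$, together with $\Lambda\in\sigma^\vee$, then force the two faces to differ exactly by the ray $\rho$, say $\{\mu,\mu'\}=\{\nu,\mathrm{cone}(\nu,\rho)\}$ with $\rho\notin\nu$, and force $e$ to be a $\widehat\nu$-root; this is precisely the $\rho$-connectedness of the pair. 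The same analysis shows no transition other than this single adjacency can occur, and since $\exp(-t\partial_e)$ is again the exponent of an LND of degree $e$, the relation is symmetric, so both directions of the transition are $\rho$-connected. Substituting this into the trajectory produces the chain and finishes the proof.
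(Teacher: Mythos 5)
Your proposal follows essentially the same route as the paper: reduce to the group $H$ of \cite[Corollary~4]{BGSh} and analyse the single transitions effected by exponents of $M$-homogeneous LNDs through their Demazure-root degrees, chaining these into the equivalence. The paper's own ``proof'' is just the discussion preceding the lemma (ending in ``So, we obtain the following lemma''), and your \emph{central claim} is precisely its ``it is easy to see'' step together with the exhaustiveness converse needed for the only-if direction --- a point you isolate and sketch more explicitly than the paper does.
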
 

Let us prove the following lemma about affine horospherical varieties. It is similar to \cite[Lemma~3.3]{B}.

\begin{lem}\label{innv}
Assume $X$ is an affine horospherical variety of complexity zero. If two $G$-orbits $O_\tau$ and $O_{\tau'}$ are $\rho_j$-connected for some $\rho_j\in\sigma(1)$. Then $\Gamma(O_\tau)=\Gamma(O_{\tau'})$. 
\end{lem}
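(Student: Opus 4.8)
The plan is to reduce the statement to a direct comparison of the divisor sets $D(O_\tau)$ and $D(O_{\tau'})$ and then to show that the single divisor by which these sets differ represents a class that is already redundant in the relevant monoid. Recall that $O_\tau$ and $O_{\tau'}$ being $\rho_j$-connected means, by the definition preceding Lemma~\ref{ch}, that $\{\tau,\tau'\}=\{\tau,\mathrm{cone}(\tau,\rho_j)\}$; so without loss of generality $\tau'=\mathrm{cone}(\tau,\rho_j)$, obtained from $\tau$ by adjoining the ray $\rho_j$. First I would translate this into the language of containments: a $G$-invariant prime divisor $D_i=\overline{O_{\rho_i}}$ contains the orbit $O_\tau$ precisely when the ray $\rho_i$ is a face of $\tau$, since the closure $\overline{O_\tau}$ is the union of all $O_\gamma$ with $\gamma\preceq\tau$. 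Comparing $\tau$ with $\tau'=\mathrm{cone}(\tau,\rho_j)$, the only new face among the rays is $\rho_j$ itself. Hence $D(O_\tau)$ and $D(O_{\tau'})$ differ exactly in the divisor $D_j$: we have $D(O_{\tau'})=D(O_\tau)\setminus\{D_j\}$, while all other $G$-invariant prime divisors lie in $D(O_\tau)$ if and only if they lie in $D(O_{\tau'})$.

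Since $\Gamma(O_{\tau'})$ is generated by the classes $[D_i]$ with $D_i\in D(O_{\tau'})$ and $\Gamma(O_\tau)$ by those same classes together with $[D_j]$, the inclusion $\Gamma(O_{\tau'})\subseteq\Gamma(O_\tau)$ is immediate. The whole content of the lemma is therefore the reverse inclusion $[D_j]\in\Gamma(O_{\tau'})$, i.e.\ that the class of the single extra divisor $D_j$ is already a nonnegative combination of the classes of the divisors in $D(O_{\tau'})$. This is where the existence of the connecting LND enters: $\rho_j$-connectedness guarantees a nonzero $M$-homogeneous LND of degree $e\in\mathfrak{R}_{\rho_j}$ which is moreover a $\widehat{\tau}$-root. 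The defining inequalities of a Demazure root give $\langle e,v_j\rangle=-1$ and $\langle e,v_i\rangle\geq 0$ for every other ray $\rho_i$. Reading these through the principal-divisor formula $\pi_A(\div(\chi^e))=\sum_i\langle e,v_i\rangle D_i$ from the second Remark of Section~3, I would obtain in $\Cl(X)$ (or in $\Cl_G(X)$, where the relation literally holds) the identity
$$
[D_j]=\sum_{i\neq j}\langle e,v_i\rangle\,[D_i],
$$
a combination with \emph{nonnegative} integer coefficients.

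It remains to check that every divisor $D_i$ appearing on the right-hand side with a positive coefficient actually belongs to $D(O_{\tau'})$, so that the right-hand side genuinely lies in the monoid $\Gamma(O_{\tau'})$ rather than merely in the group $\Cl(X)$. The key point is that if $\langle e,v_i\rangle>0$ then $\rho_i$ cannot be a face of $\tau'$: because $e$ is a $\widehat{\tau}$-root it vanishes on all rays normal to the face $\widehat{\tau}$, equivalently on all rays of $\tau$, and it also must vanish appropriately on $\rho_j$, so any ray on which $e$ is strictly positive is disjoint from the face structure of $\tau'=\mathrm{cone}(\tau,\rho_j)$; hence $O_{\tau'}\not\subseteq D_i$ and $D_i\in D(O_{\tau'})$. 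I expect this last verification — matching the support of the root $e$ against the faces of the enlarged cone $\tau'$ — to be the main obstacle, since it is the step that genuinely uses that $e$ is a $\widehat{\tau}$-root and not merely an arbitrary Demazure root of $\sigma$; the sign analysis has to be done carefully to ensure no generator with a positive coefficient secretly contains $O_{\tau'}$. Once this is in place, $[D_j]\in\Gamma(O_{\tau'})$ follows, giving $\Gamma(O_\tau)\subseteq\Gamma(O_{\tau'})$ and hence the desired equality $\Gamma(O_\tau)=\Gamma(O_{\tau'})$.
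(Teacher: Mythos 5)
Your argument is essentially the paper's own proof: identify the difference between $D(O_\tau)$ and $D(O_{\tau'})$ as the single divisor attached to $\rho_j$, use the Demazure-root relation $\pi_A(\mathrm{div}(\cdot))=\sum_i\langle e,v_i\rangle D_i$ to write $[D_j]_G$ as a nonnegative integer combination of the remaining classes, and verify that every divisor occurring with a positive coefficient is a generator of the smaller monoid. (That last verification is actually left implicit in the paper, so your final paragraph is, if anything, more careful than the published argument.) Two small points need patching. First, on a horospherical variety not every extremal ray of $\sigma$ carries a $G$-invariant prime divisor; if $\overline{O_{\rho_j}}$ has codimension at least two, your equality $D(O_{\tau'})=D(O_\tau)\setminus\{D_j\}$ does not make sense because $D_j$ does not exist --- the paper splits this off as a separate, trivial case in which $D(O_\tau)=D(O_{\tau'})$ outright. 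Second, there is no character function $\chi^e$ on $X$: since $\langle e,v_j\rangle=-1$ the weight $e$ does not lie in $\sigma^\vee$, so before invoking the order formula from the Remark you must exhibit an $M$-homogeneous \emph{rational} function of degree $e$; the paper takes $F=\partial(f)/f$ for a homogeneous $f$ with $\partial(f)\neq 0$ (any $B$-semiinvariant rational function of weight $e$ would serve, and its image under $\pi_A\circ\mathrm{div}$ depends only on $e$ by additivity of orders). With these two repairs your proof coincides with the paper's.
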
 
\begin{proof}
We can assume that $\tau$ is bigger then $\tau'$. Then there are two cases. If $\rho_j$ do not corresponde to a divisor, then  $D(O_{\tau'})=D(O_\tau)$. So, the goal is obtained. Otherwise we have $$D(O_{\tau'})=D(O_\tau)\cup\{\overline{O_{\rho_j}}\}.$$ Let $\partial$ be an LND of degree $e\in\mathfrak{R}_{\rho}$, where $e$ is a $\tau$-root.
Take a homogeneous function $f\in\KK[X]_\alpha$ such that $\partial(f)\neq 0$. Let us consider the following rational function $F=\frac{\partial(f)}{f}$. This function is $M$-homogeneous of degree $e$. Therefore 
$$\pi_A(\div(F))=\sum_{i=1}^s\langle e,v_i\rangle \overline{O_{\rho_i}}=- \overline{O_{\rho_j}}+\sum_{i\neq j}\langle e,v_i\rangle \overline{O_{\rho_i}}.$$  Therefore, in $\Cl_G(X)$ we have
$$0=- [\overline{O_{\rho_j}}]_G+\sum_{i\neq j}\langle e,v_i\rangle [\overline{O_{\rho_i}}]_G.$$
But all $\langle e,v_i\rangle$ are nonnegative for $i\neq j$. Therefore $ [\overline{O_{\rho_j}}]_G\in\Gamma_G(O_{\tau})$. Hence, $\Gamma_G(O_{\tau'})=\Gamma_G(O_{\tau})$.
\end{proof}

\begin{proof}({\it Proof of Theorem~\ref{main}})
For affine varieties Theorem~\ref{main} follows from Lemmas \ref{ch} and  \ref{innv}.

If $X$ is projective, we can consider its affine cone $\widehat{X}$. By $G$-linearization of bundel, see~\cite{KKLV} we can lift $G$-action and $\Aut(X)^0$-action on $\widehat{X}$.  The lifted $G$-action makes $\widehat{X}$ to be horospherical. To make it to be of complexity zero we can add $\KK^\times$-action by homotheties. Each $G$-orbit $\OO$ on $X$ is the projectivization of a $G\times \KK^\times$-orbit $\widehat{\OO}$ on $\widehat{X}$. And $\Gamma_G(\OO)=\Gamma_{G\times\KK^\times}(\widehat{\OO})$. So, if $\OO$ and $\OO'$ can be glued by $\Aut(X)^0$, then $\widehat{\OO}$ and $\widehat{\OO'}$ can be glued by lifting of $\Aut(X)^0$ which is contained in $\Aut(\widehat{X})^0$. Hence, $\Gamma_{G\times\KK^\times}(\widehat{\OO})=\Gamma_{G\times\KK^\times}(\widehat{\OO'})$. Therefore, $\Gamma_{G}(\OO)=\Gamma_{G}(\OO')$.
\end{proof}

\begin{ex}\label{se}
Let us show that the necessary condition proved in Theorem~\ref{main} is not a criterium. 
Let $G=\mathrm{SL}_3$ and $P=\mathfrak{X}^+(B)$. Then the cone $\sigma^\vee$ in basis of fundamental weights is $\mathrm{cone}(e_1,e_2)$.  It is easy to compute that affine 
$$X\cong\{x_1y_1+x_2y_2+x_3y_3=0\}\subset \KK^6.$$ 
The orbit, corresponding to $\tau$ is the point $q=(0,0,0,0,0,0)$. It is easy to see, that the point $q$ is $\Aut(X)$-stable since it is the unique singular point. It can be shown that there are two $\Aut(X)^0$-orbits on $X$. But each proper face correspondes to an orbit of codimension $\geq 2$. Therefore, $\Cl_G(X)$ is trivial. Hence, $\Gamma(\OO)$ does not separate  points. 
\end{ex}

Let us state the following conjecture. Compare to \cite[Conjecture~1]{BGSh}

\begin{conj}\label{ccc}
Let $X$ be an affine or a projective irreducible normal horospherical variety of complexity zero. Then two $G$-orbits $\OO$ and $\OO'$ lie in the same $\Aut(X)^0$-orbit if and only if $\Gamma_G(\OO)=\Gamma_G(\OO')$ and dimensions of tangent spaces at points of these orbits are equal.
\end{conj}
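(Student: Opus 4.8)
The plan is to treat the two implications separately, reducing throughout to the affine case by the affine-cone construction already used in the proof of Theorem~\ref{main}. Under passage to $\widehat{X}$ the invariant $\Gamma_G$ is preserved (as shown there), and the tangent space dimension shifts uniformly by one, $\dim T_{\hat{x}}\widehat{X}=\dim T_x X+1$, coming from the radial direction of the cone; so both hypotheses and both conclusions descend between $X$ and $\widehat{X}$. It therefore suffices to prove the conjecture for an affine horospherical variety of complexity zero, where Lemma~\ref{ch} is available. I first record that $\dim T_x X$ is constant along a $G$-orbit: since $G$ acts by automorphisms, $dg\colon T_xX\to T_{gx}X$ is an isomorphism, so the tangent space dimension is a genuine invariant of the orbit and the statement is well posed.

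The ``only if'' direction is the easy one. If $\OO$ and $\OO'$ lie in a common $\Aut(X)^0$-orbit then $\Gamma_G(\OO)=\Gamma_G(\OO')$ by Theorem~\ref{main}, and any automorphism $\varphi$ with $\varphi(x)=x'$ induces a linear isomorphism $d\varphi\colon T_xX\to T_{x'}X$, whence the tangent space dimensions agree.

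The substance is the ``if'' direction, for which I would argue via Lemma~\ref{ch}: it suffices to produce a chain of faces $\tau=\tau_1,\dots,\tau_m=\tau'$ with consecutive terms $\xi$-connected for some $\xi\in\sigma(1)$, i.e.\ joined by a Demazure root. The first task is a combinatorial formula for $\dim T_xX$ at a point $x\in O_\tau$ in terms of the face $\widehat{\tau}\preceq\sigma^\vee$; I expect this to be computed from the $B$-eigenspace decomposition of $\mathfrak{m}/\mathfrak{m}^2$ using the description $I(O_\tau)=\bigoplus_{\Lambda\in P\setminus\tau}S_\Lambda$, and to equal $\dim X$ plus a correction term counting the extremal rays of $\sigma$ normal to $\widehat{\tau}$ that do \emph{not} correspond to $G$-invariant divisors. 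In other words, the non-divisorial rays, which are invisible to $\Gamma_G$ (exactly the phenomenon behind Example~\ref{se}), should be precisely what the tangent dimension detects. Granting such a formula, the two hypotheses together pin down, for each orbit, both the set of divisorial rays it avoids (up to the relations in $\Cl_G$, via $\Gamma_G$) and the number of non-divisorial rays it avoids (via the tangent dimension). One then mimics Bazhov's combinatorial argument from \cite{B}: starting at $\tau$, any ray $\rho$ whose divisor class lies in $\Gamma_G(\OO)$ but whose divisor does not yet appear in $D(O_\tau)$ is realized by a $\widehat{\tau}$-root, because the defining relation in $\Cl_G$ exhibits $[\overline{O_\rho}]_G$ as a nonnegative combination of the remaining generators, exactly as in the proof of Lemma~\ref{innv}. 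Iterating these Demazure-root moves enlarges the face along such rays until the two invariants force $\tau$ and $\tau'$ into a common face of the chain.

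The main obstacle is precisely the interaction of the two invariants on the non-divisorial rays. For a toric variety every extremal ray gives a divisor, so $\Gamma$ alone suffices and the tangent condition is automatic; the entire difficulty of the horospherical case is that $\Gamma_G$ forgets the non-divisorial rays, and one must show that the tangent space dimension recovers them in a manner compatible with the Demazure-root moves of Lemma~\ref{ch}. Establishing the exact tangent space formula for horospherical $O_\tau$, and then checking that equality of this refined invariant is not merely necessary but actually permits the chain to be \emph{completed} rather than merely started, is where I expect the real work to lie, and where there is a genuine risk that the hypotheses as stated must be strengthened.
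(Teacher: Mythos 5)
The statement you are addressing is Conjecture~\ref{ccc}: the paper offers no proof of it, only the necessity of $\Gamma_G(\OO)=\Gamma_G(\OO')$ (Theorem~\ref{main}) together with Example~\ref{se} motivating the extra tangent-space hypothesis; sufficiency is open even in the affine case (compare \cite[Conjecture~1]{BGSh}). Your ``only if'' direction is fine and matches the paper's material. But the reduction and the ``if'' direction both have concrete gaps. First, the affine-cone reduction is one-directional: the paper lifts $\Aut(X)^0$ to $\Aut(\widehat{X})^0$ via linearization, but an element of $\Aut(\widehat{X})^0$ gluing $\widehat{\OO}$ to $\widehat{\OO'}$ need not normalize the $\KK^\times$-action by homotheties and hence need not descend to an automorphism of $X$; so proving the affine case of the conjecture for $\widehat{X}$ would not yield the projective case for $X$. (Also the claim $\dim T_{\hat x}\widehat{X}=\dim T_xX+1$ presumes the punctured cone is locally trivial over $x$, which is not automatic when the polarizing divisor is Weil and not Cartier.)

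Second, the pivotal step of your ``if'' direction --- that a ray $\rho$ with $[\overline{O_\rho}]_G\in\Gamma_G(O_\tau)$ is ``realized by a $\widehat\tau$-root'' --- reverses Lemma~\ref{innv} without justification, and this is exactly where the argument fails. A relation in $\Cl_G(X)$ does produce an $M$-homogeneous rational function $F$ with $\pi_A(\div(F))=-\overline{O_{\rho}}+(\text{effective part})$, but this controls only the coefficients along $G$-invariant divisors: the weight of $F$ can pair negatively with the non-divisorial extremal rays of $\sigma$, so it need not be a Demazure root, and $\div(F)$ itself can fail to have the form $-W_\rho+E$ with $E$ effective, which is what the construction of a homogeneous LND in Lemma~\ref{exlnd} requires. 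This is precisely the phenomenon of Example~\ref{se}, where $\Cl_G(X)$ is trivial yet the singular point is fixed by all of $\Aut(X)$. Your proposed remedy --- a tangent-space formula counting the non-divisorial rays normal to $\widehat\tau$ --- is only conjectured, not proved, and even granting it you give no mechanism by which equality of that count allows the chain of Demazure-root moves demanded by Lemma~\ref{ch} to be completed rather than merely started. As you concede in your final sentence, the real work is untouched: what you have is a plausible research plan consistent with the paper's framework, not a proof of the conjecture.
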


\end{document}